 \newtheorem{thm}{Theorem}
 \newtheorem{cor}{Corollary}
 \newtheorem{prop}{Proposition}
 \newtheorem{lem}{Lemma}
\theoremstyle{definition}
 \newtheorem{exam}{Example}}
\theoremstyle{definition}
 \newtheorem{rem}{Remark}}
\theoremstyle{definition}
 \newcommand{\mF}{\mathcal{F}}
 \newcommand{\mA}{\mathcal{A}}
 \newcommand{\mR}{ \mathcal{R}}
 \newcommand{\mB}{ \mathcal{B}}
 \newcommand{\mN}{ \mathcal{N}}
  \newcommand{\mI}{ \mathcal{I}}
  \newcommand{\degree}{ \textrm{deg}\,}
 \newcommand{\C}{ \mathbb{C}} 
 \newcommand{\Z}{ \mathbb{Z}}
 \newcommand{\R}{ \mathbb{R}}
 \newcommand{\Q}{ \mathbb{Q}}  
 \newcommand{\dl}{ \langle \langle}
 \newcommand{\dr}{\rangle \rangle}
\def\diaCrossP{\unitlength.1em
  \begin{minipage}{15\unitlength}
    \begin{picture}(15,15)
      \put(0,0){\vector(1,1){15}}
      \qbezier(15,0)(15,0)(10,5)
      \qbezier(5,10)(0,15)(0,15)
      \put(0,15){\vector(-1,1){0}}
    \end{picture}
  \end{minipage}
}
\def\diaCrossN{\unitlength.1em
  \begin{minipage}{15\unitlength}
    \begin{picture}(15,15)
      \put(15,0){\vector(-1,1){15}}
      \qbezier(0,0)(0,0)(5,5)
      \qbezier(10,10)(15,15)(15,15)
      \put(15,15){\vector(1,1){0}}
    \end{picture}
  \end{minipage}
}
\def\diaProjected{\unitlength.1em
  \begin{minipage}{14\unitlength}
    \begin{picture}(14,14)
      \put(0,0){\vector(1,1){14}}
      \put(14,0){\vector(-1,1){14}}
    \end{picture}
  \end{minipage}
}
\begin{document}  
\title{A note on geometric constructions of bi-invariant orderings}
\author{Tetsuya Ito}
\address{Graduate School of Mathematical Science, University of Tokyo, Japan}
\email{tetitoh@ms.u-tokyo.ac.jp}
\urladdr{http://ms.u-tokyo.ac.jp/~tetitoh}
\subjclass[2010]{Primary~20F60  Secondary~55P62}
\keywords{bi-invariant ordering, iterated integral, holonomy representation, rational homotopy}
 
\begin{abstract}
   We construct bi-invariant total orderings of residually torsion-free nilpotent groups by using Chen's iterated integrals. This construction can be seen as a generalization of the Magnus ordering of the free groups, and equivalent to the classical construction which uses an iteration of central extensions. Our geometric construction provides a connection between bi-orderings and the rational homotopy theory.
\end{abstract}

 \maketitle

\section{Introduction}

 A total ordering $<$ of a group $G$ is called {\it left-invariant} (resp. {\it right-invariant}) if the relation $<$ is preserved by multiplication of $G$ from left (resp. right). That is, $a < b$ implies $ca <cb$ (resp. $ac < bc$) for all $a,b,c \in G$. If the ordering is both left- and right- invariant, the ordering is called {\it bi-ordering}. The group $G$ is said to be {\it bi-orderable} if $G$ has a bi-ordering.

The most simple way to define a total ordering is to use the lexicographical ordering.
Let $\{v_{i}: G \rightarrow \R\}_{i \in \mI}$ be a family of maps, not necessarily a homomorphism, indexed by a well-ordered set $\mI$.
 We say $\{v_{i}\}_{i \in \mI}$ is a {\it lexicographical expression} of a total ordering $<$ of $G$ if $a<b$ is equivalent to the sequence of reals $\{v_{i}(b)\}_{i \in \mI}$ is bigger than the sequence of reals $\{v_{i}(a)\}_{i \in \mI}$ with respect to the lexicographical ordering of $\R^{\mI}$.

One of the typical but non-trivial examples of bi-orderings is the Magnus ordering of the free groups. Let $F_{n}$ be the rank $n$ free group generated by $\{x_{1},\ldots,x_{n}\}$ and $\mu: F_{n} \rightarrow \R \dl X_{1},\ldots, X_{n}\dr$ be the Magnus expansion which is an injective homomorphism defined by $\mu(x_{i})=1+X_{i}$ (See \cite{mks}). 
Here $\R \dl X_{1},\ldots, X_{n} \dr $ represents the algebra of non-commutative formal power series of variables $\{X_{1},\ldots, X_{n}\}$. 
Let $\mI$ be the set of monomials of $\R \dl X_{1},\ldots, X_{n} \dr$. 
We define the {\sf DegLex}-ordering $<_{\textsf{DegLex}}$ on $\mI$ as follows. For monomials $X_{I}=X_{i_{1}}\cdots X_{i_{k}}$ and $X_{J}=X_{j_{1}}\cdots X_{j_{l}}$ we define $X_{I} <_{\textsf{DegLex}} X_{J}$ if $k<l$, or $k=l$ and the sequence of integers $(j_{1},\ldots ,j_{k})$ is bigger than $(i_{1},\ldots,i_{k})$ with respect to the lexicographical ordering of $\R^{k}$. 
For a monomial $I \in \mI$, let $\overline{v_{I}}: \R \dl X_{1},\ldots, X_{n} \dr \rightarrow \R$ be a map defined by $\overline{v_{I}}(\Sigma_{J \in \mI} \;r_{J} \cdot J) = r_{I}$ and  let $v_{I} = \overline{v_{I}}\circ\mu:F_{n} \rightarrow \R$.
The {\it Magnus ordering} $<_{M}$ is a total ordering of $F_{n}$ defined by a lexicographical expression $\{v_{I}\}_{I\in \mI}$. It is known that the Magnus ordering is bi-ordering. See \cite{rw} for details.

A group $G$ is called {\it residually torsion-free nilpotent} if $G_{\infty} = \bigcap_{i\geq 1} G_{i} = \{1\}$, where $G_{k}$ is the $k$-th dimension subgroup of $G$ defined by $G_{k}=\{ g \in G \: | \: g-1 \in J^{k}\}$. Here $J$ denote the augmentation ideal of the group ring $\Z G$. The free group is a typical example of residually torsion-free nilpotent groups.

The aim of this paper is to provide a new geometric point of view for bi-orderings of residually torsion-free nilpotent groups. In section 3, we construct bi-orderings which we call {\it holonomy orderings} by using the universal holonomy map, which is derived from Chen's iterated integral theory. Chen's iterated integral theory will be reviewed in section 2.
Our constructions turn out to be equivalent to the classically known construction of bi-orderings which uses group extensions (Proposition \ref{prop:CisH}), so our construction provides a geometrical background of the classical construction.

Since Chen's iterated integral theory is closely related to the rational homotopy theory, holonomy ordering construction provides new relationships between bi-orderings and the rational homotopy theory. For example, we will show that finite type invariants of the pure braid group $P_{n}$ provides a lexicographical expression of a bi-ordering of $P_{n}$ (Proposition \ref{prop:finite} ). In section 4 we study properties of holonomy orderings, and provide a generalization of holonomy ordering construction. 
\\

\textbf{Acknowledgments.} 
The author wishes to express his gratitudes to Toshitake Kohno for many useful conversations. This research was supported by JSPS Research Fellowships for Young Scientists.

\section{Chen's iterated integral and holonomy representation}

In this section we briefly review Chen's iterated integral theory. For details, see \cite{c}.
We restrict our attention to smooth manifolds, although there is a generalization of Chen's iterated integral theories for simplicial complexes \cite{h}, and our methods can also be applied for such a general iterated integrals.

\subsection{Iterated integrals}

Let $M$ be a connected smooth manifold with a base point. We denote the de Rham DGA of $M$ by $A^{*}_{DR}(M)$ and the based loop space of $M$ by $\Omega M$. 
Let $\Delta_{q}$ be the standard $q$-dimensional simplex defined by
\[ \Delta_{q}=\{(t_{1},\ldots, t_{q}) \in \R^{q} \: |\: 0 \leq t_{1} \leq \cdots \leq t_{q} \leq 1 \}. \]
and  $ev: \Omega M \times \Delta_{q} \rightarrow M^{q}$ be the evaluation map 
defined by 
\[ ev(\gamma, (t_{1},\ldots ,t_{q}) ) = ( \gamma(t_{1}), \ldots,  \gamma(t_{q}) ) . \]

Formally, the iterated integral map $\int$ is defined as the composition
\[ \int : A_{DR}^{*}(M) ^{\otimes k} \stackrel{\times}{\rightarrow} A_{DR}^{*}(M^{k}) \stackrel{ev^{*}}{\rightarrow} A_{DR}^{*}(\Delta_{k} \times \Omega M) \stackrel{\int_{\Delta_{k}}}{\rightarrow} A_{DR}^{*}(\Omega M) \]
  where $\times$ is a cross-product and $\int_{\Delta_{k}}$ is an integration along fiber.

In this paper, we mainly use iterated integrals of $1$-forms. For $1$-forms we have more explicit description of iterated integrals.
Let $\omega_{1},\ldots,\omega_{m}$ be $1$-forms of $M$, and $\gamma : I \rightarrow M $ be a piecewise smooth path. Let us denote the pull-back of $\omega_{i}$ by $\gamma$ by $ \gamma^{*}\omega_{i}= \alpha_{i}(t) dt$. Then the iterated integral $\int_{\gamma}\omega_{1}\cdots \omega_{q}$ along $\gamma$ is explicitly written as the multiple integral 
\[ \int_{\gamma} \omega_{1}\cdots \omega_{q} = \int_{0 \leq t_{1} \leq \cdots \leq t_{q} \leq 1} \alpha_{1}(t_{1}) \alpha_{2}(t_{2}) \cdots \alpha_{q}(t_{q}) dt_{1}dt_{2}\cdots dt_{q}. \]

\subsection{Formal homology connection, Chen's holonomy map}

In this section we define Chen's universal holonomy map. See \cite{c} chapter III for details. 

From now on, we always assume that the real homology group of $M$ is finite dimensional. 
Let $\{ X_{1},\ldots, X_{m}\}$ be a homogeneous, ordered basis of $H_{*}(M; \R)$ such that $\{X_{1},\ldots,X_{k}\}$ forms a basis of $H_{1}(M;\R)$.

Let $J$ be the augmentation ideal of $TH$, the tensor algebra of $H$ and $\widehat{TH} = \varprojlim TH\slash J^{N}$ be the nilpotent completion of $TH$.
Then $\widehat{TH}$ is identified with the algebra of non-commutative formal power series $\R \dl X_{1},\ldots ,X_{m}\dr$.
Let $\widehat{J}$ be the nilpotent completion of the augmentation ideal $J$ of $TH$, which is an ideal of $\widehat{TH}$ which consists of formal power series with zero constant term.
We define a grading of $\widehat{TH}$ by the formula $\degree X_{1}\cdots X_{k} = p_{1}+\cdots +p_{k} -k$ where $p_{i}$ is the degree of $X_{i}$, and define the involution $\varepsilon: A_{DR}^{*}(M) \rightarrow A_{DR}^{*}(M)$ by $\varepsilon(\omega)= (-1)^{\degree \omega} \omega$.

 Let $\delta$ be a degree $1$ derivation of $\widehat{TH}$ and $\overline{\omega} =  \sum _{i_{1},\ldots, i_{p}} \omega_{i_{1}\cdots i_{p}} X_{i_{1}}\cdots X_{i_{p}}$ be an element of $A_{DR}^{*}(M)\otimes \widehat{TH}$.
We call a pair $(\overline{\omega},\delta)$ is a {\it formal homology connection} if the following three conditions hold.
\begin{enumerate}
\item $\delta X_{i} \in \widehat{J}^{2}$.
\item $\degree \omega_{i_{1}\cdots i_{p}} = \degree X_{i_{1}}\cdots X_{i_{p}}$.
\item $d\overline{\omega} + \delta \overline{\omega} = \varepsilon (\overline{\omega}) \wedge \overline{\omega}$.
\end{enumerate}

Such a pair $(\overline{\omega},\delta)$ always exists and we can choose $\overline{\omega}$ so that the coefficient $\omega_{i}$ represents the cohomology class which is the dual of $X_{i} \in H_{*}(M;\R)$.
In fact, formal homology connections are inductively constructed from the de Rham DGA $A_{DR}^{*}(M)$.
There are many choices of formal homology connections, but in some cases there is a canonical choice of a formal homology connection. For example, if $M$ is a compact Riemannian manifold, one can find a canonical formal homology connection by using the de Rham-Hodge decomposition of $A_{DR}^{*}(M)$.

Let $\mR$ be the degree $0$ part of $\widehat{TH}$. $\mR$ is identified with $\widehat{TH_{1}} = \R \dl X_{1},\ldots, X_{k} \dr$, the nilpotent completion of the tensor algebra of $H_{1}(M;\R)$.
Let $\omega$ be the degree $0$ part of the formal homology connection $\overline{\omega}$ and $\mN$ be the degree $0$ part of $\delta(\widehat{TH})$, which is an ideal of $\mR$. Let us denote the quotient algebra $\mR \slash \mN$ (resp. $\mR \slash (\mN + \widehat{J}^{k})$) by $R$ (resp. $R_{k}$). We remark that $R$ is nothing but the $0$-th homology group of the complex $(\widehat{TH}, \delta)$.
The (Chen's) {\it holonomy representation} is a homomorphism $\Theta : \pi_{1}(M) \rightarrow R$ defined by 
\[ \Theta([\gamma]) = 1 + \sum_{i=1}^{\infty} \int_{\gamma} \underbrace{\omega \omega \cdots \omega}_{i} \]
and the {\it order $k$ holonomy representation} is a homomorphism $\Theta_{k} : \pi_{1}(M) \rightarrow R_{k}$ defined by 
\[ \Theta_{k}([\gamma]) = 1 + \sum_{i=1}^{k} \int_{\gamma} \underbrace{\omega \omega \cdots \omega}_{i} \]
 Chen's results are summarized as follows.
\begin{thm}[Chen \cite{c}]
Let $\Theta$, $\Theta_{k}$ be as above.
Then,
\begin{enumerate}
\item $\textrm{Ker}\,\Theta = \pi_{1} (M)_{\infty}$ and $\textrm{Ker}\,\Theta_{k}=\pi_{1} (M)_{k}$.
\item $\Theta$ induces an isomorphism of Hopf algebra $\Theta: \widehat{\R\pi_{M}} \rightarrow R$, where $\widehat{\R \pi_{M}}$ is the nilpotent completion of the group ring $\R \pi_{1}(M)$.
\item $\Theta_{k}$ induces an isomorphism of Hopf algebra $\Theta_{k}: \R\pi_{1}(M) \slash J^{k} \rightarrow R_{k}$.
\end{enumerate}
\end{thm}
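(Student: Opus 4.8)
The plan is to establish the three assertions in the order (homomorphism property) $\to$ (3) $\to$ (1),(2), since the kernel computations and the inverse-limit statement both follow formally once the finite-level isomorphism (3) is available.

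First I would check that $\Theta$ and $\Theta_{k}$ are genuine group homomorphisms into the group-like units of $R$ and $R_{k}$. Extracting the relevant weight part of the integrability condition $d\overline{\omega} + \delta\overline{\omega} = \varepsilon(\overline{\omega}) \wedge \overline{\omega}$ and reducing modulo the ideal $\mN$ yields a Maurer--Cartan type flatness relation $d\omega = \omega \wedge \omega$ in $R$ (resp. $R_{k}$) for the degree $0$ part $\omega$; here one uses $\delta X_{i} \in \widehat{J}^{2}$, so the $\delta\overline{\omega}$ term contributes to $\mN$ and dies, while $\varepsilon$ acts trivially on the $1$-form part. Granting flatness, two standard properties of iterated integrals finish the step: the concatenation formula gives multiplicativity $\Theta([\gamma_{1}\gamma_{2}]) = \Theta([\gamma_{1}])\Theta([\gamma_{2}])$, and homotopy invariance follows from Stokes' theorem applied to the flatness relation. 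The shuffle formula moreover shows each $\Theta([\gamma])$ is group-like, which later supplies the Hopf-algebra compatibility.

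The core is then assertion (3), that $\Theta_{k}$ induces an isomorphism $\R\pi_{1}(M)\slash J^{k} \to R_{k}$. I would extend $\Theta$ linearly to the group ring and observe that $\Theta([\gamma]) - 1$ has zero constant term, so $J$ maps into $\widehat{J}$ and, by multiplicativity, $J^{N}$ into $\widehat{J}^{N}$ modulo $\mN$; hence $\Theta_{k}$ respects the augmentation filtrations and it suffices to prove that the induced map on associated graded pieces is an isomorphism in each weight. Surjectivity is the easier half: in weight one both sides are identified with $H_{1}(M;\R)$ (on the target because $\mN \subseteq \widehat{J}^{2}$ and $\omega_{i}$ represents the class dual to $X_{i}$), and products of weight-one elements generate, so completeness of the filtrations upgrades this to surjectivity of $\Theta_{k}$.

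The main obstacle is injectivity, which is precisely the statement that iterated integrals of length $<k$ separate the elements of $\R\pi_{1}(M)\slash J^{k}$. I would reduce this to the identification of the graded dual of $\R\pi_{1}(M)\slash J^{k}$ with the length-filtered zeroth cohomology $H^{0}$ of the reduced bar complex of $A_{DR}^{*}(M)$ --- Chen's $\pi_{1}$-de Rham theorem --- together with the fact, recorded above, that the formal homology connection $(\overline{\omega},\delta)$ is a finite-dimensional model computing $R = H_{0}(\widehat{TH},\delta)$. These are the deepest inputs of Chen's theory, and I would invoke them from \cite{c} rather than reprove them. Finally, (1) and (2) follow formally: $g \in \textrm{Ker}\,\Theta_{k}$ means $\Theta_{k}(g-1)=0$, equivalent by (3) to $g - 1 \in J^{k}$, i.e. $g \in \pi_{1}(M)_{k}$; passing to the inverse limit over $k$, using $\widehat{\R\pi_{M}} = \varprojlim \R\pi_{1}(M)\slash J^{k}$ and $R = \varprojlim R_{k}$, yields both $\textrm{Ker}\,\Theta = \pi_{1}(M)_{\infty}$ and the isomorphism $\Theta : \widehat{\R\pi_{M}} \to R$, which is Hopf by the group-likeness established in the first step.
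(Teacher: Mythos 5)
This statement is imported verbatim from Chen's work --- the paper introduces it with ``Chen's results are summarized as follows'' and gives no proof of its own, so there is no internal argument to compare against. Your outline is nevertheless a faithful skeleton of how Chen's original proof actually runs: flatness of the induced connection on $R$ (obtained by killing the $\delta\overline{\omega}$ term in $\mN$) gives homotopy invariance via Stokes, the concatenation formula gives multiplicativity, the shuffle relations give group-likeness and hence the Hopf structure, compatibility with the augmentation filtrations plus the weight-one identification with $H_{1}(M;\R)$ gives surjectivity, and (1) and (2) do follow formally from (3) by the inverse-limit argument you describe. Two caveats. First, the entire mathematical weight of the theorem sits in the injectivity of $\Theta_{k}$ on $\R\pi_{1}(M)\slash J^{k}$, i.e.\ Chen's $\pi_{1}$-de Rham theorem, and you explicitly black-box this; that is a legitimate choice (and consistent with what the paper itself does), but it means your write-up is an organized citation rather than a proof --- the one step you defer is the one that cannot be recovered by formal manipulation. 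Second, a sign slip: with the paper's convention $\varepsilon(\omega)=(-1)^{\degree \omega}\omega$, the involution does \emph{not} act trivially on $1$-forms but negates them, so the flatness relation obtained in $R$ is $d\omega + \omega\wedge\omega = 0$ rather than $d\omega = \omega\wedge\omega$; this does not affect the structure of the argument, only the stated curvature identity.
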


Thus, if $\pi_{1}(M)$ is residually torsion-free nilpotent, then the universal holonomy map is injective.

\begin{rem}
If $M$ is simply connected (more generally, if $M$ is a nilpotent space), a formal homology connection computes the homology group of the based loop space $\Omega M$. More precisely, there exists a natural isomorphism of Hopf algebra
\[ H_{*}(\Omega M) \cong H_{*}(\widehat{TH},\delta) .\]
\end{rem}
\begin{rem}
To define the holonomy map, we do not need the whole formal homology connection $(\overline{\omega},\delta)$. Only we need is the degree $0$ and $1$ parts. From the construction of formal homology connection, the degree $\leq 1$ part is computed by using $A_{DR}^{\leq 2}(M)$, the degree $\leq 2$ part of de Rham DGA. In particular, it is sufficient to assume that $H_{\leq 2}(M;\R)$ is finite dimensional. 
\end{rem}

We say $M$ has a {\it quadratic formal homology connection} if the ideal $\mN$ is quadratic. That is, the ideal $\mN$ is generated by elements of $H_{1}(M) \otimes H_{1}(M)$.
Let $\cup^{*}: H_{2}(M) \rightarrow H_{1}(M) \otimes H_{1}(M)$ be the dual of the cup product. It is known that the quadratic part of the ideal $\mN$ is generated by the image of $\cup^{*}$. Thus, if $M$ has a quadratic formal homology connection, then the ideal $\mN$ is determined by the cup products.

The condition that $M$ has a quadratic formal homology connection is equivalent to the condition that $M$ is {\it formal} in the sense of Sullivan's rational homotopy theory \cite{s}. That is, the cohomology algebra $H^{*}_{DR}(M)$ viewed as DGA having zero differential is quasi-isomorphic to the de Rham DGA $A^{*}_{DR}(M)$.

\section{Holonomy construction of bi-invariant orderings}

 In this section we give a construction of bi-invariant orderings by using Chen's holonomy map, and provide some examples.
  
\subsection{Definition of holonomy orderings}

Let $M$ be a connected smooth manifold and $G$ be its fundamental group.
We always assume that $H_{\leq 2}(M ; \R)$ is finite dimensional and use the notations in section 2.

Let $R = \mF^{0}R \supset \mF^{1}R \supset \cdots $ be the decreasing filtration of $R$ induced by the powers of the ideal $\widehat{J}$.
This filtration is multiplicative, that is, $\mF^{k}R \cdot \mF^{l}R \subset \mF^{k+l}R$ holds.
Let us choose a subspace $R^{i}$ of $R$ so that $\mF^{k}= \bigoplus_{i \geq k} R^{i}$ holds. Then this filtration defines the grading $R=\bigoplus_{i=0}^{\infty} R^{i}$.

 Let $\mB = \{v_{i}\}_{  i \in \mI }$ be an ordered, homogeneous basis of $R$ such that the degree is non-decreasing. That is, $\degree v_{i} \leq \degree v_{j}$ if $i<j$.
Let $\{v_{i}^{*}: R \rightarrow \R\}_{i \in \mI}$ be the dual basis of $R$.
We denote by $<_{\mB}$ the total ordering of the vector space $R$ defined by the lexicographical expression ${v_{i}^{*}}$. That is, for $a,b \in R$, we define $a <_{\mB} b$ if the sequence of reals $\{v_{i}^{*}(b)\}_{i \in \mI}$ are bigger than $\{v_{i}^{*}(a)\}_{i \in \mI}$ with respect to the lexicographical ordering of $\R^{\mI}$.

The {\it holonomy ordering} is a total ordering $<$ of $G \slash G_{\infty}$ defined by $a<b$ if $\Theta(a) <_{\mB} \Theta(b)$. In other words, the holonomy ordering is an ordering defined by the lexicographical expression $\{v_{i}^{*}\circ \Theta\}_{i \in \mI}$. The following theorem is the main result of this paper.

\begin{thm}
\label{thm:main1}
The holonomy ordering is a bi-invariant total ordering of $G \slash G_{\infty}$.
\end{thm}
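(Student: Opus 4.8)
The plan is to pull the bi-invariance of the lexicographic ordering $<_{\mB}$ of the vector space $R$ back through the holonomy map $\Theta$. Two structural features make this work: $<_{\mB}$ is invariant under additive translation of $R$, and every element of the image of $\Theta$ is group-like, i.e.\ lies in $1+\mF^{1}R$. The argument then reduces to a single observation about how the $\widehat{J}$-adic filtration behaves under multiplication.

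First I would dispose of the formal points. Because the index set $\mI$ is well-ordered, lexicographic comparison of the coordinate sequences $\{v_{i}^{*}(a)\}_{i\in\mI}$ is a genuine total ordering of $R$; and since each $v_{i}^{*}$ is linear while the lexicographic order on $\R^{\mI}$ is preserved by addition, $<_{\mB}$ is translation-invariant, meaning $a<_{\mB}b$ if and only if $0<_{\mB}b-a$. By part (1) of Chen's theorem $\textrm{Ker}\,\Theta=G_{\infty}$, so $\Theta$ is injective on $G/G_{\infty}$, whence the pullback ordering is total and the whole problem reduces to checking left- and right-invariance.

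For the main step, note that from the definition of $\Theta$ each iterated integral of $i$ copies of $\omega$ lies in $\widehat{J}^{i}$, so $\Theta(g)\in 1+\mF^{1}R$ for every $g$. Fix $a<_{\mB}b$ and an arbitrary $c$, and put $x=\Theta(a)$, $y=\Theta(b)$, $z=\Theta(c)$, $w=y-x$. Since $x$ and $y$ have constant term $1$, we have $w\in\mF^{1}R$, and translation-invariance gives $0<_{\mB}w$. Let $j=\min\{i\in\mI: v_{i}^{*}(w)\neq0\}$ be the leading index and $d=\degree v_{j}$; because $\mB$ lists its degrees non-decreasingly, every $i\le j$ has $\degree v_{i}\le d$, so $w\in\mF^{d}R$ with $d\ge1$. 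Writing $z=1+z'$ with $z'\in\mF^{1}R$, multiplicativity of the filtration yields $z'w\in\mF^{d+1}R$ and $wz'\in\mF^{d+1}R$, hence $zw\equiv w\equiv wz \pmod{\mF^{d+1}R}$. Every coordinate functional $v_{i}^{*}$ with $\degree v_{i}\le d$ annihilates $\mF^{d+1}R$, so $v_{i}^{*}(zw)=v_{i}^{*}(w)=v_{i}^{*}(wz)$ for all $i\le j$. Consequently $zw$ and $wz$ share the leading index $j$ and the same positive leading coefficient as $w$; that is $0<_{\mB}zw=\Theta(cb)-\Theta(ca)$ and $0<_{\mB}wz=\Theta(bc)-\Theta(ac)$. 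By translation-invariance and the fact that $\Theta$ is a homomorphism, this is precisely $ca<cb$ and $ac<bc$.

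The only real content sits in this last step: left or right multiplication by a group-like element $1+\mF^{1}R$ leaves the $\widehat{J}$-adic leading term of any $w\in\mF^{1}R$ unchanged. Everything else is formal, granting Chen's theorem and the multiplicativity $\mF^{k}R\cdot\mF^{l}R\subset\mF^{k+l}R$. The point requiring care is the bookkeeping that the non-decreasing arrangement of degrees in $\mB$ makes the lexicographically leading coordinate of $w$ coincide with its $\widehat{J}$-adic leading term, so that \emph{unchanged modulo $\mF^{d+1}R$} translates into \emph{same sign of the first nonzero coordinate}.
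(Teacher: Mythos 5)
Your proof is correct and follows essentially the same route as the paper's: both arguments rest on the facts that $\Theta(g)\in 1+\mF^{1}R$ and that $\mF^{k}R\cdot\mF^{l}R\subset\mF^{k+l}R$, so that multiplying by $\Theta(c)$ perturbs $\Theta(b)-\Theta(a)$ only in filtration degrees strictly above its leading degree, which the degree-non-decreasing arrangement of $\mB$ turns into preservation of the lexicographically first nonzero coordinate. Your version is a slightly cleaner packaging (working with the difference $w$ and translation-invariance rather than writing out the degree decompositions of $\Theta(ga)$ and $\Theta(gb)$ separately), but the substance is identical.
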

\begin{proof}
First observe that from the definition of Chen's holonomy map $\Theta$, the image of $\Theta$ lies in $1+\mF^{1}(R)$.
Now assume that $a<b$ for $a,b \in G$. Then we can write their images as
\[ \Theta(a) = 1 + A_{< i } + A_{i} + A_{> i}, \; \Theta(b) = 1+A_{< i} + B_{i} + B_{>i} \]
where $A_{<i}$ (resp. $A_{i}$, $A_{>i }$) is degree $<i$ (resp. $i$, $>i$) part. Since $a<b$, $A_{i} <_{\mB} B_{i}$ hold.
We show the left-invariance of the holonomy ordering $<$. The proof of right-invariance is similar.
For $g \in G$, let us denote its image by $\Theta(g) = 1 + G_{< i} + G_{i} + G_{>i}$, where $G_{<i}$ (resp. $G_{i},G_{>i}$) represents the degree $<i$ (resp. $i$, $i>$) part.  
Since $R_{i} \cdot R_{j} \subset \mF^{i+j} R$, the degree $< i$ part of $\Theta(ga)$ is the degree $<i$ part of $\Theta(g) \cdot(1 + A_{<_{i}})$. Similarly, the degree $i$ part of $\Theta(ga)$ is given by $A_{i} + G_{i} + [(1+A_{< i})(1+ G_{< i})]_{i}$, where $[(1+A_{< i})(1+ G_{< i})]_{i}$ represents the degree $i$ part of $(1+A_{< i})(1+ G_{< i})$.

On the other hand, the degree $<i$ part of $\Theta(gb)$ is the degree $<i$ part of $\Theta(g) \cdot(1 + A<_{i})$, and the degree $i$ part of $\Theta(gb)$ is given by $B_{i} + G_{i} + [(1+A_{< i})(1+ G_{< i})]_{i}$.
Thus, $\Theta(gb)-\Theta(ga) = B_{i} -A_{i} + \textrm{ (higher parts)}$, so we conclude $ga < gb$.
\end{proof}

This provides a geometric proof of bi-orderability of residually torsion-free nilpotent groups.

\begin{cor}
If a group $G$ is residually torsion-free nilpotent, then $G$ is bi-orderable.
\end{cor}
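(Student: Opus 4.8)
The plan is to read the Corollary off Theorem \ref{thm:main1}. Since $G$ is residually torsion-free nilpotent we have $G_{\infty}=\bigcap_{i\ge 1}G_{i}=\{1\}$, so $G \slash G_{\infty}=G$; thus it is enough to realize $G$ as the fundamental group of a connected smooth manifold $M$ with $H_{\le 2}(M;\R)$ finite dimensional, for then the holonomy ordering of Theorem \ref{thm:main1} is immediately a bi-invariant total ordering of $G \slash G_{\infty}=G$.

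First I would settle the finitely presented case, which is the geometrically clean one: any finitely presented group is the fundamental group of a closed smooth $4$-manifold $M$ (thicken a presentation $2$-complex and pass to the boundary of a regular neighborhood), and a closed manifold has finite dimensional homology in every degree, so Theorem \ref{thm:main1} applies verbatim. To pass to an arbitrary residually torsion-free nilpotent $G$ I would use that bi-orderability is a local property: a group is bi-orderable as soon as each of its finitely generated subgroups is, by a standard compactness argument on the space of conjugation-invariant positive cones together with the fact that a bi-ordering restricts to any subgroup. Every finitely generated subgroup $H\le G$ is again residually torsion-free nilpotent, so the problem reduces to the finitely generated case.

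The step I expect to be the main obstacle is precisely the finite dimensionality hypothesis on $H_{\le 2}(M;\R)$ built into the holonomy construction. For finitely generated $H$ the group $H_{1}(H;\R)$ is automatically finite dimensional, but $H_{2}(M;\R)$ always surjects onto $H_{2}(H;\R)$, so no manifold realizing $H$ can have finite dimensional second homology once $H_{2}(H;\R)$ is infinite dimensional, as can occur for finitely generated but not finitely presented $H$. The way around this, I think, is to exploit that the holonomy ordering depends only on the degree $0$ and $1$ parts of a formal homology connection, equivalently only on the $J$-adic filtration of $\R H$: one replaces the geometric construction by the purely algebraic lexicographic ordering of the nilpotent completion $\widehat{\R H}$, whose left- and right-invariance follow from the identical filtration-multiplicativity computation carried out in the proof of Theorem \ref{thm:main1}, and which coincides with the holonomy ordering whenever a suitable manifold exists by Proposition \ref{prop:CisH}. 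This algebraic substitute needs no manifold and so disposes of the non-finitely-presented groups, while the geometric description remains the conceptual content whenever $H_{\le 2}(M;\R)$ is finite dimensional.
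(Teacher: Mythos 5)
Your proposal is correct, and its core step is exactly the paper's: the paper offers no separate proof of the Corollary, deducing it immediately from Theorem \ref{thm:main1} together with $G_{\infty}=\{1\}$. What you add, and what the paper silently passes over, is the realization problem: Theorem \ref{thm:main1} only applies to $G=\pi_{1}(M)$ with $H_{\leq 2}(M;\R)$ finite dimensional, and an arbitrary residually torsion-free nilpotent group need not admit such a model (it need not even be countable, and, as you correctly note via Hopf's theorem, a finitely generated but not finitely presented $G$ with $H_{2}(G;\R)$ infinite dimensional admits no manifold model with finite dimensional $H_{2}$). Your repairs are all sound: closed $4$-manifold realization for finitely presented groups; the standard local-finiteness of bi-orderability to reduce to finitely generated subgroups, which are again residually torsion-free nilpotent since $J_{H}^{k}\subset J_{G}^{k}$ gives $H_{k}\subset G_{k}\cap H$; and the purely algebraic lexicographic ordering on $\widehat{\R H}$ for the remaining cases. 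Note that this last algebraic substitute is essentially what the paper itself supplies elsewhere, namely the iterated extension construction of Section 3.2 built on Lemma \ref{lem:extension}, which works for every residually torsion-free nilpotent group with no topological input and is shown in Proposition \ref{prop:CisH} to agree with the holonomy construction when a manifold model exists. So the honest summary is: the paper's route gives the geometric interpretation where it applies, the algebraic route (yours, or equivalently Section 3.2) is what actually proves the Corollary in full generality, and your proposal is the more complete argument.
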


The map $v_{i}^{*}\circ \Theta : G \rightarrow \R$ is written as the iterated integral 
$ [\gamma] \mapsto \int_{\gamma} \omega_{1}\cdots \omega_{k}$ of some $1$-forms $\omega_{1},\ldots,\omega_{k}$. Thus, the lexicographical expression of the holonomy ordering is given as the iterated integrals. 

\subsection{Comparison with classical construction of bi-orderings}

We compare the holonomy ordering construction and the classical construction of bi-orderings for residually torsion-free nilpotent groups which use an iteration of group extension. 

The classical construction is based on the following lemma, whose proof is routine (see \cite{lrr}).

\begin{lem}
\label{lem:extension}
Let $H,K$ be a bi-orderable group  and  $ 1 \rightarrow H \rightarrow G \stackrel{p}{\rightarrow} K \rightarrow 1$
be a group extension.
For a bi-ordering $<_{K}$ of $K$ and a bi-ordering $<_{H}$ of $H$, define an ordering $<_{G}$ of $G$ by $g <_{G} g'$ if $p(g) <_{K} p(g')$ or, $p(g)=p(g')$ and $1<_{G} g^{-1}g'$.
If $<_{H}$ is invariant under the action of $G$, that is,  for all $h,h' \in H$ $h <_{H} h'$ implies $ghg^{-1}<_{H} gh'g^{-1}$ for all $g \in G$, then $<_{G}$ is a bi-ordering.
\end{lem}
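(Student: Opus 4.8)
The plan is to verify directly that $<_G$ satisfies the three defining properties of a bi-ordering: it is a total order, it is left-invariant, and it is right-invariant. Throughout I identify $H$ with its image in $G$, so that whenever $p(g)=p(g')$ the element $g^{-1}g'$ lies in $H$ and the clause ``$1<_G g^{-1}g'$'' is read as ``$1<_H g^{-1}g'$''. The first routine observation, used repeatedly, is that $g^{-1}g'<_H 1$ is equivalent, by left-invariance of $<_H$, to $1<_H {g'}^{-1}g$, hence to $g'<_G g$; combined with trichotomy of $<_K$ on $K$ and of $<_H$ on $H$ this gives trichotomy of $<_G$.

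For transitivity, suppose $g<_G g'$ and $g'<_G g''$ and split into cases according to the projections. If at least one of the two comparisons is strict on the $K$-level, transitivity of $<_K$ together with the relevant equality ($p(g'')=p(g')$ or $p(g)=p(g')$) immediately yields $p(g)<_K p(g'')$, and hence $g<_G g''$. The only genuinely new case is $p(g)=p(g')=p(g'')$, where $1<_H g^{-1}g'$ and $1<_H {g'}^{-1}g''$; writing $g^{-1}g''=(g^{-1}g')({g'}^{-1}g'')$ and using that a product of two $<_H$-positive elements is $<_H$-positive, a standard consequence of $<_H$ being a bi-ordering, gives $1<_H g^{-1}g''$, i.e. $g<_G g''$.

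Left-invariance is the easy half. Given $g<_G g'$ and any $f\in G$, note $p(fg)=p(f)p(g)$ and $p(fg')=p(f)p(g')$, so the $K$-level comparison is preserved by left-invariance of $<_K$; and in the case of equal projections the crucial point is that the $H$-component is unchanged, since $(fg)^{-1}(fg')=g^{-1}g'$. Thus left-invariance needs no extra hypothesis on $<_H$.

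Right-invariance is where the conjugation-invariance assumption is indispensable, and this is the step I expect to be the main obstacle. For $g<_G g'$ and $f\in G$ the $K$-level comparison is again preserved, now by right-invariance of $<_K$. When $p(g)=p(g')$, however, the relevant $H$-element is no longer $g^{-1}g'$ but its conjugate $(gf)^{-1}(g'f)=f^{-1}(g^{-1}g')f$. The hypothesis that $<_H$ is invariant under the conjugation action of $G$ is exactly what guarantees that $1<_H g^{-1}g'$ implies $1<_H f^{-1}(g^{-1}g')f$: apply the invariance with conjugating element $f^{-1}$ to the pair $1<_H g^{-1}g'$. This gives $gf<_G g'f$, and assembling the three properties completes the proof.
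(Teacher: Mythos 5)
Your proof is correct. The paper does not actually supply an argument for this lemma (it calls the proof ``routine'' and cites Linnell--Rhemtulla--Rolfsen), and your case-by-case verification --- trichotomy and transitivity via positivity of $g^{-1}g'$ in $H$, left-invariance from $(fg)^{-1}(fg')=g^{-1}g'$, and right-invariance from the conjugation-invariance hypothesis applied to $1<_H g^{-1}g'$ with conjugator $f^{-1}$ --- is exactly the routine argument being invoked, including the correct reading of the statement's ``$1<_G g^{-1}g'$'' as ``$1<_H g^{-1}g'$''.
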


Now using Lemma \ref{lem:extension}, we construct a bi-ordering of a residually torsion-free nilpotent group $G$ as follows.
We inductively construct a bi-ordering $<_{k}$ of $G \slash G_{k}$ for each $k>0$. To begin with, $G \slash G_{1} =\{1\}$ so let $<_{1}$ be the trivial bi-ordering.

To define $<_{k+1}$, first we choose a bi-invariant ordering $<'_{k}$ of torsion-free abelian group $ G_{k} \slash G_{k+1}$. 
Observe that there is a central extension
\[ 1 \rightarrow G_{k}\slash G_{k+1} \rightarrow G \slash  G_{k+1} \rightarrow G \slash  G_{k} \rightarrow 1.\]
From this central extension and bi-orderings $<_{k}$, $<'_{k}$, we construct an ordering $<_{k+1}$ of $G \slash G_{k+1}$ as in Lemma \ref{lem:extension}. Since the extension is central, the assumption of Lemma \ref{lem:extension} is satisfied so $<_{k+1}$ is indeed a bi-ordering.

Since $G$ is torsion-free nilpotent, the sequence of bi-orderings $<_{k}$ defines a bi-ordering $<_{G}$ of $G$.
We call this bi-ordering $<_{G}$ an {\it iterated extension ordering} of $G$ derived from $\{<'_{k}\}$.

Now we show this iterated extension construction is equivalent to the holonomy construction given in section 3.1.

\begin{prop}
\label{prop:CisH}
Let $G= \pi_{1}(M)$ be a residually torsion-free nilpotent group.
Then each holonomy ordering is an iterated extension ordering and conversely, each iterated extension ordering is a holonomy ordering. Thus, the holonomy ordering construction is equivalent to the iterated extension ordering construction.
\end{prop}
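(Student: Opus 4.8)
The plan is to establish the equivalence in two directions by carefully matching the two filtrations that underlie each construction: the lower-central-type filtration $G \supset G_2 \supset G_3 \supset \cdots$ used in the iterated-extension construction, and the powers-of-$\widehat{J}$ filtration $R = \mF^0 R \supset \mF^1 R \supset \cdots$ used in the holonomy construction. The crucial bridge is Chen's Theorem part (3), which gives isomorphisms $\Theta_k : \R\pi_1(M)/J^k \to R_k$ compatible with the filtrations, together with part (1), which identifies $\textrm{Ker}\,\Theta_k = G_k$. These tell us that $\Theta$ carries the dimension-subgroup filtration of $G$ precisely onto the $\widehat{J}$-adic filtration of $R$, so that $G_k/G_{k+1}$ embeds as a lattice in the graded piece $R^k = \mF^k R / \mF^{k+1} R$ of the associated graded. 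I would record this compatibility as the first and central step.

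Given that bridge, I would prove the ``holonomy $\Rightarrow$ iterated extension'' direction first. Fix a holonomy ordering coming from a homogeneous basis $\mB = \{v_i\}$ ordered by non-decreasing degree. Because the basis is graded and the comparison in the lexicographical expression $\{v_i^* \circ \Theta\}$ proceeds degree by degree, an element $g \in G_k \setminus G_{k+1}$ has $\Theta(g) = 1 + (\text{degree } k \text{ part}) + \cdots$ with nonzero degree-$k$ part, and the sign of $g$ relative to $1$ is decided by the lowest basis vectors, namely those of degree exactly $k$. So the restriction of the holonomy ordering to each $G_k/G_{k+1}$ is exactly the bi-ordering $<'_k$ of that torsion-free abelian group read off from the degree-$k$ basis vectors. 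I would then check that the holonomy ordering's global comparison rule coincides with the recursive rule of Lemma \ref{lem:extension}: to compare $a$ and $b$, one looks at the first degree $k$ at which $\Theta(a)$ and $\Theta(b)$ differ, which is exactly the step at which their images in $G/G_k$ agree but in $G/G_{k+1}$ disagree. This is precisely the ``$p(g)=p(g')$ and then compare $1 <_G g^{-1}g'$'' recursion of the extension construction, so the holonomy ordering is the iterated extension ordering derived from $\{<'_k\}$.

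For the converse, ``iterated extension $\Rightarrow$ holonomy,'' I would start from an arbitrary iterated extension ordering built from a choice of bi-orderings $<'_k$ of the torsion-free abelian groups $G_k/G_{k+1}$. Using the bridge from the first step, I would transport each $<'_k$ through $\Theta$ to a total ordering of the lattice $\Theta(G_k/G_{k+1}) \subset R^k$, extend it (compatibly with scaling) to a lexicographic ordering of the real vector space $R^k$, and then choose the homogeneous basis $\mB$ of each graded piece to realize that lexicographic ordering --- concretely, order a basis of $R^k$ so that the induced lexicographical expression on $R^k$ restricts to $<'_k$. Assembling these choices across all $k$ into a single non-decreasing-degree basis produces a holonomy ordering, which by the first direction's computation induces exactly $\{<'_k\}$ on the graded pieces and hence equals the given iterated extension ordering.

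\textbf{The main obstacle} I expect is the converse: the iterated extension construction only specifies bi-orderings of the \emph{rational} (indeed, after tensoring, real) abelian quotients $G_k/G_{k+1}$, and I must verify that \emph{any} such ordering of a lattice $\Theta(G_k/G_{k+1})$ can be realized by a lexicographic expression coming from some homogeneous basis of the ambient vector space $R^k$. A bi-ordering of a finitely generated free abelian group is equivalent to a choice of a flag of rational hyperplanes (or a generic linear functional), and I would need to argue that every such ordering extends to --- and is recovered by --- a lexicographic ordering of $R^k$ with respect to a suitable basis; the care lies in checking that the extension respects the lattice and that no two distinct lattice points collapse under the chosen functionals. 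Once this linear-algebra realization lemma is in place, the rest is the bookkeeping already carried out in the proof of Theorem \ref{thm:main1}.
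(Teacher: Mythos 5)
Your proposal follows essentially the same route as the paper: identify $G_k/G_{k+1}$ as a lattice in the degree-$k$ graded piece of $R$ via $\Theta$, restrict the holonomy ordering to get the orderings $\{<'_k\}$ for one direction, and for the converse realize each $<'_k$ by a lexicographical expression on $(G_k/G_{k+1})\otimes\R$ coming from a suitable ordered basis, then assemble these into a degree non-decreasing basis of $R$. The realization step you flag as the main obstacle (that every bi-ordering of a finitely generated free abelian lattice is induced by a lexicographic system of real linear functionals, a classical fact) is exactly the point the paper asserts without comment, so your extra care there is a refinement rather than a divergence.
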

\begin{proof}
We use notations in section 2 and 3.1. 
Let $\{v_{i}\}$ be an ordered, homogeneous, degree non-decreasing basis of $R$ which defines a holonomy ordering $<_{H}$. 
By definition of $R_{i}$, the universal holonomy map provides an isomorphism $( G_{k} \slash G_{k+1}) \otimes \R \cong R_{i}$. So we can regard $G_{k} \slash G_{k+1}$ as an integer lattice of $R_{i}$.
Let $<'_{k}$ be the restriction of the ordering $<_{H}$ to $G_{k} \slash G_{k+1}$.
Then $<'_{k}$ is a bi-ordering, and the holonomy ordering $<_{H}$ is nothing but the iterated extension ordering derived from this sequence of orderings $\{<'_{k}\}$.

Conversely, let $<_{G}$ be an iterated extension ordering of $G$ derived from $\{<'_{k}\}$.
Then the ordering $<'_{k}$ is naturally extended as a bi-ordering $\widetilde{<'_{k}}$ of $( G_{k} \slash G_{k+1}) \otimes \R = R_{i}$. 
Let us take an ordered basis $\{v_{(k), i}\}_{i=1,2,\ldots}$ of $R_{k}$ so that the dual basis 
$\{v_{(k), i}^{*}\}$ gives a lexicographical expression of the ordering $\widetilde{<'_{k}}$.
Now by correcting the basis $\{v_{(k),i }\}$ of $R_{k}$ for all $k$, we obtain an ordered, homogeneous, degree non-decreasing basis $\{v_{i}\}$ of $R$. The holonomy ordering defined by the basis $\{v_{i}\}$ is identical with $<_{G}$.
\end{proof}

Now we show that the holonomy ordering construction is indeed an extension of the Magnus ordering of the free group $F_{n}$. The following proposition is directly obtained from Proposition \ref{prop:CisH}, because the Magnus ordering is an iterated extension ordering. However, here we present a different and direct proof which is based on the fact that Magnus expansion is equivalent to Chen's holonomy map.

\begin{prop}
\label{prop:MisH}
The Magnus ordering $<_{M}$ of the free group is a holonomy ordering.
\end{prop}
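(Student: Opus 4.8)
The plan is to realize the Magnus ordering as a holonomy ordering for a concrete smooth manifold whose fundamental group is $F_{n}$, and then to exploit the fact that, up to a filtration-preserving change of coordinates, Chen's holonomy map is the Magnus expansion. First I would take $M$ to be a smooth manifold with $\pi_{1}(M)\cong F_{n}$ and $H_{2}(M;\R)=0$, for instance the punctured plane $\R^{2}\setminus\{p_{1},\ldots,p_{n}\}$. Since $H_{2}(M;\R)=0$ all cup products vanish, so the ideal $\mN$ is zero and $R=\widehat{TH_{1}}=\R\dl X_{1},\ldots,X_{n}\dr$ is exactly the Magnus algebra; moreover the $\widehat{J}$-adic filtration $\mF^{\bullet}R$ is the filtration by word length, so each graded piece $R^{k}$ is spanned by the length-$k$ monomials. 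Choosing the angular $1$-forms $\omega_{i}=\frac{1}{2\pi}d\theta_{i}$ around the punctures (together with the higher correction terms forced by condition (3), which exist because the exact $2$-forms $\omega_{i}\wedge\omega_{j}$ admit primitives), I obtain a formal homology connection with $\delta=0$ whose degree $0$ part is $\omega=\sum_{i}\omega_{i}X_{i}+(\text{higher word-length terms})$.

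The heart of the argument is to compare the holonomy map $\Theta$ with the Magnus expansion $\mu$. Since $\int_{x_{k}}\omega_{i}=\delta_{ik}$ for the standard generators $x_{k}$, the word-length-$1$ part of $\Theta(x_{k})$ is $X_{k}$, exactly as for $\mu(x_{k})=1+X_{k}$. Both $\mu$ and $\Theta$ are injective homomorphisms of $F_{n}$ into the units of $\R\dl X_{1},\ldots,X_{n}\dr$, and by part (2) of Chen's theorem both induce the canonical identification of the associated graded of $\widehat{\R F_{n}}$ with $\widehat{TH_{1}}$ sending $x_{k}-1\mapsto X_{k}$. Consequently $\Phi:=\Theta\circ\mu^{-1}$ extends to a continuous algebra automorphism of $\R\dl X_{1},\ldots,X_{n}\dr$ that preserves the word-length filtration and induces the \emph{identity} on the associated graded; concretely $\Phi$ sends each monomial to itself plus terms of strictly larger word length.

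With this in hand I would choose the basis $\mB$ to be the set of monomials, ordered by non-decreasing word length and, within a fixed length, by the same rule as $<_{\textsf{DegLex}}$; this is a homogeneous, degree non-decreasing basis, hence defines a holonomy ordering. To finish I compare the two lexicographical expressions. Because $\Phi$ is the identity on the associated graded, for each $k$ the word-length-$k$ part of $\Theta(g)=\Phi(\mu(g))$ equals the word-length-$k$ part of $\mu(g)$ plus a term depending only on the parts of $\mu(g)$ of word length $<k$. Given $a\neq b$, let $k$ be the lowest word length at which the degree-$k$ parts of $\mu(a)$ and $\mu(b)$ differ. The triangular shape of $\Phi$ then forces the degree-$<k$ parts of $\Theta(a)$ and $\Theta(b)$ to agree as well, while in degree $k$ the difference $\Theta(b)-\Theta(a)$ equals $\mu(b)-\mu(a)$ exactly, the correction terms being identical for $a$ and $b$. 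Hence the first monomial at which the dual-basis sequences disagree, and the sign of the discrepancy there, are the same for $\Theta$ and for $\mu$, so the holonomy ordering defined by $\mB$ coincides with $<_{M}$.

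The step I expect to be the main obstacle is establishing rigorously that $\Phi=\Theta\circ\mu^{-1}$ is filtration-preserving and induces the identity on the associated graded, that is, that the only discrepancy between Chen's holonomy expansion and the Magnus expansion is a unipotent, degree-raising coordinate change. Everything after that is the bookkeeping triangularity argument, whose only delicate point is to align the within-degree convention of the basis $\mB$ with the order-reversed lexicographic rule built into $<_{\textsf{DegLex}}$.
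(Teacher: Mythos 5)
Your proposal is correct and takes essentially the same approach as the paper: realize $F_{n}$ as the fundamental group of a punctured plane/disc, identify $R$ with $\R \dl X_{1},\ldots,X_{n}\dr$, and observe that $\Theta$ differs from the Magnus expansion $\mu$ by a filtration-preserving automorphism that is the identity on the associated graded, so the two lexicographical expressions define the same ordering. The only cosmetic differences are that the paper chooses the $\omega_{i}$ with $\omega_{i}\wedge\omega_{j}=0$ (avoiding your higher correction terms) and defines the comparison automorphism explicitly on generators as $\alpha(X_{i})=X_{i}+X_{i}^{\geq 2}$ rather than abstractly as $\Theta\circ\mu^{-1}$.
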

\begin{proof}
Let $D_{n}$ be the $n$-punctured disc and $\{x_{1},\ldots,x_{n}\}$ be a generator of $\pi_{1}(D_{n}) = F_{n}$. Take $1$-forms $\omega_{1},\ldots,\omega_{n}$ of $D_{n}$ so that their representing cohomology classes $\{[\omega_{i}]\}$ are dual to $\{x_{i}\}$, and they satisfy the equation $\omega_{i} \wedge \omega_{j}=0 $. Then, the formal homology connection is taken as $(\omega = \sum_{i=1}^{n}\omega_{i} X_{i}, 0)$ and the holonomy representation defines an injective homomorphism 
\[ \Theta : F_{n} \rightarrow \R \dl X_{1},\ldots,X_{n}\dr.\]
Let us denote $\Theta(x_{i} ) = 1 + X_{i} + X^{\geq 2} _{i}$, where $X^{\geq 2}_{i}$ is the degree $\geq 2$ part.

 Let $\mB$ be $\textsf{DegLex}$-ordered monomial basis of $R = \dl X_{1},\ldots,X_{m}\dr$, and $<$ be the holonomy ordering defined by $(\omega,0)$ and $\mB$. 
 Let $\alpha$ be an automorphism of $\R \dl X_{1},\ldots,X_{n}\dr$ defined by $\alpha (X_{i} )= X_{i} + X^{\geq 2}_{i}$. Then $\Theta = \alpha \circ \mu$ holds. For each element $x \in \R \dl X_{1},\ldots,X_{n}\dr$, $\alpha$ preserves the lowest degree part of $x$. Hence if $a <_{M} b$ then $a < b$ holds, so these two orderings are identical.
\end{proof}

\subsection{Application: bi-ordering of pure braid groups and finite type invariants}

In this section we describe a relationship between holonomy orderings and finite type invariants of pure braids. 
This relation is based on the work of Kohno \cite{k1}, which relates iterated integral and finite type invariants of braids. For details of pure braid groups and their finite type invariants, see \cite{k1}.

For $1 \leq i < j \leq n$, let $H_{i,j}=\textrm{ker}\,(z_{i}-z_{j})$ be a hyperplane of $\C^{n}$. The hyperplane arrangement $\mA_{n} = \{ H_{i,j} \:|\: 1 \leq i < j \leq n\}$ is called the {\it braid arrangement}. We denote by $M_{\mA}$ the complement of the arrangement $\C^{n}- \bigcup_{H \in \mA} H$. The fundamental group of $M_{\mA}$ is the pure braid group $P_{n}$, and residually torsion-free nilpotent.

First of all, let us review the definition of finite type invariants.
A singular pure braid is a pure braid having transversal double points. We denote the set of singular pure braids having $k$ singular points by $S^{k}P_{n}$ and let $SP_{n}= \bigcup_{k \geq 0 }S^{k}P_{n}$.
Each map $v : P_{n}\rightarrow \R$ can be extended to the map $\widetilde{v}: SP_{n} \rightarrow \R$ by defining $\widetilde{v}(\diaProjected) =  v(\diaCrossP)-v(\diaCrossN)$. A map $v$ is called a {\it finite type invariant} of order $k$ if $\widetilde{v}(\beta)=0$ for all $\beta \in S^{\geq k}P_{n}$. 
Let $V_{k}(P_{n})$ be the set of order $k$ finite type invariants and $V(P_{n}) = \bigcup_{k >0} V_{k}(P_{n})$ be the set of all finite type invariants. Then $V_{k}(P_{n})$ and $V(P_{n})$ are $\R$-vector spaces.

Finite type invariants and holonomy orderings are related as follows.

 \begin{prop}
 \label{prop:finite}
 Let $\mB=\{v_{i}\}_{i \in \mI}$ be an ordered basis of $V(P_{n})$ such that the order of $v_{i}$ are non-decreasing.
 Let us define a total ordering $<_{\mB}$ of $P_{n}$ by $\alpha <_{\mB} \beta$ if $\{ v_{i}(\beta)\}_{i \in \mI}$ is bigger than $\{v_{i}(\alpha)\}_{i \in \mI}$ with respect to the lexicographical ordering. Then $<_{\mB}$ is a holonomy ordering, hence bi-invariant total ordering. Thus, the sequence of finite type invariants $\{v_{i}\}_{i \in \mI}$ gives a lexicographical expression of a holonomy ordering of $P_{n}$.
 \end{prop}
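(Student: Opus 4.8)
The plan is to deduce the statement from Proposition \ref{prop:CisH} (equivalently Theorem \ref{thm:main1}) by setting up a precise dictionary between finite type invariants of $P_n$ and the linear functionals on $R$ that enter a holonomy ordering. The bridge is Kohno's theorem \cite{k1}: the resolution $\diaProjected \mapsto \diaCrossP - \diaCrossN$ sends a singular braid with $k$ double points into the $k$-th power $J^k$ of the augmentation ideal of $\R P_n$, and such elements span $J^k$. Hence a finite type invariant of order $k$ is exactly a linear functional on $\R P_n$ killing $J^k$, i.e. an element of $(\R P_n/J^k)^*$. Applying the Hopf-algebra isomorphism $\Theta_k:\R P_n/J^k \to R_k$ of Chen's theorem (part (3)) and passing to the limit, I would identify
\[ V(P_n)\;\cong\;\bigoplus_{i\ge 0}(R^i)^*, \qquad v\longmapsto \psi\ \text{with}\ v=\psi\circ\Theta, \]
the restricted dual of $R$ graded by the $\widehat{J}$-adic degree $R=\bigoplus_i R^i$ of section 3.1 (for $M_{\mA}$ the ideal $\mN$ is the Arnold/infinitesimal pure braid ideal, since $M_{\mA}$ is formal). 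Under this identification the \emph{order} of an invariant corresponds to its top $\widehat{J}$-adic degree, and every functional $v_i^*\circ\Theta$ appearing in a holonomy ordering is literally a finite type invariant, so both lexicographic expressions live in the same space.

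Next I would translate the given data. Writing $\psi_i\in\bigoplus_d(R^d)^*$ for the functional with $v_i=\psi_i\circ\Theta$, the hypothesis that the orders of the $v_i$ are non-decreasing becomes the statement that the top degrees $d_i$ of the $\psi_i$ are non-decreasing. I would then replace each $\psi_i$ by its leading homogeneous component $\psi_i^{\mathrm{top}}\in(R^{d_i})^*$ and verify that, because the basis is adapted to the order-filtration, the $\psi_i^{\mathrm{top}}$ again form a homogeneous, degree-non-decreasing basis of the restricted dual of $R$. Dualizing produces an ordered homogeneous degree-non-decreasing basis $\mathcal B=\{u_i\}$ of $R$ with $u_i^{*}=\psi_i^{\mathrm{top}}$, which is precisely the data defining a holonomy ordering $<_{\mathcal B}$.

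Finally I would invoke an elementary lemma on lexicographic orderings: if two families $\{f_i\}$, $\{g_i\}$ over a common well-ordered index set are related by $g_i=c_{ii}f_i+\sum_{j<i}c_{ij}f_j$ with $c_{ii}>0$, then they define the same lexicographic ordering. Passing from $\{\psi_i^{\mathrm{top}}\}$ to $\{\psi_i\}$ only adds lower-degree terms, and since the index set is sorted by non-decreasing degree, those terms are supported on \emph{earlier} indices; thus the change is exactly of triangular form with diagonal $1$. The lemma then gives $<_{\mathcal B}\,=\,<_{\mathcal B}$ of the proposition as orderings on $P_n$, and Theorem \ref{thm:main1} shows this common ordering is a bi-invariant holonomy ordering.

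The main obstacle I expect is the second step: one must ensure that ``ordered by non-decreasing order'' yields a basis genuinely \emph{adapted} to the order-filtration, so that the leading symbols $\{\psi_i^{\mathrm{top}}\}$ still span. The delicate point is that only the filtration by order, not the placement of an individual basis vector, governs the ordering, so the replacement of each invariant by its top-order symbol has to be routed through the triangular lemma rather than carried out naively; if the basis were allowed to mix distinct top degrees within a single order-level, the leading parts could fail to be a basis and $<_{\mathcal B}$ need not be homogeneous. The remaining ingredients, namely Kohno's spanning statement for $J^k$ and Chen's isomorphism $\Theta_k$, are quoted from \cite{k1} and section 2 and require no further argument.
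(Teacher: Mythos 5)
Your route is the same as the paper's: both arguments rest on Kohno's identification of $V_{k}(P_{n})$ with $\mathrm{Hom}(\R P_{n}\slash J^{k+1},\R)$, composed with Chen's isomorphism $\Theta_{k}$, so that $V(P_{n})$ becomes the dual of $R$ and the order of an invariant becomes its degree. You are in fact more careful than the paper at two points: you work with the restricted dual $\bigoplus_{i}(R^{i})^{*}$ rather than the full dual $\mathrm{Hom}_{\R}(R,\R)$ (which is what the correspondence actually produces, $R$ being a completion), and you make explicit the triangular change of basis needed to replace each filtered functional $\psi_{i}$ by its homogeneous leading part $\psi_{i}^{\mathrm{top}}$ without changing the lexicographic ordering. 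The paper compresses both steps into the single sentence that a degree non-decreasing dual basis of $R$ ``corresponds to'' an order non-decreasing basis of $V(P_{n})$, so on this portion your write-up is a genuine improvement before it hands off to Theorem \ref{thm:main1}.

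The ``delicate point'' you flag, however, cannot be verified from the stated hypothesis: a basis of $V(P_{n})$ whose orders are non-decreasing need \emph{not} be adapted to the filtration $V_{0}(P_{n})\subset V_{1}(P_{n})\subset\cdots$, and when it is not, the leading symbols $\psi_{i}^{\mathrm{top}}$ are neither independent nor spanning and the conclusion actually fails. Already for $P_{2}\cong\Z$, where $V(P_{2})$ is the space of polynomial functions of $n$ and the order of an invariant is the degree of the polynomial, the basis $\{\,n^{2}-n,\ n^{2},\ n^{2}+1,\ n^{3},\ n^{4},\dots\}$ has non-decreasing orders (the first three invariants have equal order), yet the lexicographic ordering it defines satisfies both $0<1$ and $0<-1$, so it is not even left-invariant. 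Hence your proof --- like the paper's --- silently requires the stronger (surely intended) hypothesis that $\{v_{i}:\mathrm{ord}(v_{i})\le k\}$ spans $V_{k}(P_{n})$ for every $k$. Granting that, your leading symbols do form a homogeneous, degree non-decreasing basis of $\bigoplus_{i}(R^{i})^{*}$, your triangular lemma shows the two lexicographic expressions agree, and the proof closes exactly as the paper's does; without it, the gap is in the statement of the proposition itself rather than in anything you could supply.
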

\begin{proof}
Let $\Theta: P_{n} \rightarrow R$ be a holonomy map.
There is a natural isomorphism $V_{k}(P_{n}) \cong \textrm{Hom}(\R P_{n} \slash J^{k+1} ,\R)$, so $R^{*}= \textrm{Hom}_{\R}(R, \R) \cong V(P_{n})$ \cite{k1}. Therefore a degree non-decreasing dual basis of the algebra $R$ corresponds to an order non-decreasing basis of $V(P_{n})$. 
Thus the basis $\mB$ provides a lexicographical expression of a holonomy ordering of $P_{n}$. 
\end{proof}

\begin{rem}
  The holonomy map $\Theta: P_{n} \rightarrow R$ is known as the universal finite type invariant of the pure braid groups (the universal representation of the quantum group representations of pure braid groups), which is a prototype of the Kontsevich invariant of knots. By using the Drinfel'd associator, we can construct the universal holonomy map over the rationals in more explicit form \cite{k2}. In particular, the conclusion of Proposition \ref{prop:finite} holds for $\Q$-valued finite type invariants.
\end{rem}

\section{Properties of holonomy orderings and generalization}

In this section we study structures and properties of the holonomy orderings, based on the relationship between Chen's theory and rational homotopy theory. 

\subsection{General case}

First of all, we study properties of holonomy orderings for general spaces.
The following proposition is a refinement of famous Stallings' Theorem \cite{st}.

\begin{prop}
\label{prop:structure1}
If $H_{2}(M ;\R)= 0$, then $\pi_{1}(M)$ contains the rank $b_{1}(M)$ free group $F$ such that the restriction of a holonomy ordering $<$ to $F$ is the Magnus ordering, where $b_{1}(M)$ is the 1st Betti number of $M$.
\end{prop}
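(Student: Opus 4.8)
The plan is to first pin down the algebra $R$ and then transplant the argument of Proposition \ref{prop:MisH}.

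\emph{Step 1 (the holonomy algebra is free).} I would begin by showing that the hypothesis $H_{2}(M;\R)=0$ forces $R$ to be the free complete tensor algebra $\R\dl X_{1},\ldots,X_{k}\dr$ on $k=b_{1}(M)$ generators. Recall that $\mR=\R\dl X_{1},\ldots,X_{k}\dr$ is the degree $0$ part of $\widehat{TH}$ and $R=\mR/\mN$, where $\mN$ is the degree $0$ part of $\delta(\widehat{TH})$. Since $\degree X=p-1$ for a homology class of homological degree $p$, the degree one generators of $\widehat{TH}$ are exactly those corresponding to a basis of $H_{2}(M;\R)$; when $H_{2}(M;\R)=0$ there are none, so the degree $0$ part of $\delta(\widehat{TH})$ is trivial. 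Hence $\mN=0$ and $R=\R\dl X_{1},\ldots,X_{k}\dr$. (This is consistent with the fact that the quadratic part of $\mN$ is the image of $\cup^{*}\colon H_{2}(M)\to H_{1}(M)\otimes H_{1}(M)$, which also vanishes.) This is the step that carries the real content, being the holonomy-theoretic form of Stallings' theorem, and I expect it to be the main obstacle: one must be certain that $H_{2}=0$ annihilates not merely the quadratic relations but the entire ideal $\mN$, which is where the absence of degree one generators is essential.

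\emph{Step 2 (producing the free subgroup).} The degree one part of $\Theta$ is the real abelianization $G\to H_{1}(M;\R)$, which is onto, so I would choose $\gamma_{1},\ldots,\gamma_{k}\in G$ whose homology classes are $\R$-linearly independent and take these classes as the basis $X_{1},\ldots,X_{k}$ of $H_{1}(M;\R)$; then $\Theta(\gamma_{i})=1+X_{i}+(\degree\geq 2)$. Let $\phi\colon F_{k}\to G$ send the free generators $x_{1},\ldots,x_{k}$ to $\gamma_{1},\ldots,\gamma_{k}$. Exactly as in Proposition \ref{prop:MisH}, the tangent-to-the-identity algebra automorphism $\alpha$ of $\R\dl X_{1},\ldots,X_{k}\dr$ determined by $\alpha(X_{i})=X_{i}+(\degree\geq 2\text{ part of }\Theta(\gamma_{i}))$ satisfies $\Theta\circ\phi=\alpha\circ\mu$, since both homomorphisms send $x_{i}$ to $\Theta(\gamma_{i})$ and agree on generators. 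As $\mu$ is injective and $\alpha$ is an automorphism, $\Theta\circ\phi$ is injective; hence $\phi$ is injective and $F:=\phi(F_{k})$ is free of rank $k=b_{1}(M)$. Moreover $\ker(\Theta|_{F})=F\cap\ker\Theta=F\cap G_{\infty}=\{1\}$, so $F$ embeds into $G/G_{\infty}$ and the holonomy ordering restricts to it.

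\emph{Step 3 (the restriction is the Magnus ordering).} Finally I would take $\mB$ to be the $\textsf{DegLex}$ monomial basis of $R=\R\dl X_{1},\ldots,X_{k}\dr$ and compare the resulting holonomy ordering with $<_{M}$. Because $\alpha$ preserves the lowest degree homogeneous part of every element, for $a,b\in F_{k}$ the lowest degree at which $\mu(a)$ and $\mu(b)$ differ coincides with the lowest degree at which $\Theta(\phi(a))=\alpha(\mu(a))$ and $\Theta(\phi(b))=\alpha(\mu(b))$ differ, and the comparison within that degree is unchanged. Therefore $a<_{M}b$ if and only if $\phi(a)<\phi(b)$, that is, $<$ restricted to $F$ is the Magnus ordering, which completes the proof.
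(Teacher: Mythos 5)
Your proposal is correct and follows essentially the same route as the paper: it deduces $\mN=0$ from $H_{2}(M;\R)=0$, lifts a basis of $H_{1}(M;\R)$ to group elements $\gamma_{i}$ with $\Theta(\gamma_{i})=1+X_{i}+(\text{higher})$, and then reuses the automorphism $\alpha$ from Proposition \ref{prop:MisH} to identify $\Theta$ on the subgroup with $\alpha\circ\mu$, giving both freeness and the Magnus ordering. Your Step 1 merely spells out in more detail (via the grading of $\widehat{TH}$ and the absence of degree-one generators) what the paper asserts in one line, and your remark that $F\cap G_{\infty}=\{1\}$ is a small but welcome addition.
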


\begin{proof} 
Let $\{X_{1},\ldots, X_{m}\}$ be a basis of $H_{1}(M;\R)$ and $\{g_{1},\ldots,g_{m}\}$ be elements of $\pi_{1}(M)$ such that $h(g_{i}) = X_{i}$, where $h :\pi_{1}(M) \rightarrow H_{1}(M;\R)$ is the Hurewicz homomorphism.
Let $\mB_{\sf DegLex}$ be an ordered basis of $R = \dl X_{1},\ldots,X_{m}\dr$ which consists of the monomials of $R$ and ordered by the {\sf DegLex-}ordering. 
Let us choose $1$-forms $\{\omega_{1},\ldots ,\omega_{m}\}$ on $M$ so that their representing cohomology classes are dual to $\{X_{1},\ldots, X_{m}\}$. Take a formal homology connection $\omega = \sum_{i=1}^{m} \omega_{i} + \cdots$. Since $H_{2}(M ; \R ) = 0$, the ideal $\mN$ must be trivial.
Let $\Theta: \pi_{1}(M) \rightarrow  \mR = \R \dl X_{1}, \ldots, X_{m}\dr$ be the holonomy map. 
We denote the holonomy ordering with respect to the basis $\mB_{{\sf DegLex}}$ by $<_{M}$. 
Let $F$ be a subgroup of $G$ generated by $\{g_{1},\ldots,g_{m}\}$. Then 
$\Theta(g_{i}) = 1 + X_{i} + \textrm{(Higher term)}$, so as in the proof of Proposition $\ref{prop:MisH}$, we conclude that $F$ is the rank $m$ free groups, and the restriction of $<_{M}$ to $F$ is the Magnus ordering.
\end{proof}

One of the benefits to consider the holonomy orderings is that we can associate the properties of groups with the properties of manifolds (topological space having the given group as its fundamental group). 
 
\begin{thm}
\label{thm:retract}
Let $N$ be a smooth manifold which is a retract of $M$ and $\iota : N \hookrightarrow M$ be the inclusion. Then for each holonomy ordering $<_{N}$ of $\pi_{1}(N)$, there exists a holonomy ordering $<_{M}$ of $\pi_{1}(M)$ such that the restriction of $<_{M}$ to $\iota_{*}(\pi_{1}(N))$ is $<_{N}$. Thus, every holonomy ordering of $N$ can be extended as a holonomy ordering of $M$.
\end{thm}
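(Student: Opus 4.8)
The plan is to deduce the statement from the functoriality of Chen's holonomy map together with the algebraic splitting that a retraction provides, and then to build a matching basis by hand. Write $r : M \to N$ for the retraction, so $r \circ \iota = \textrm{id}_N$ and hence $r_* \circ \iota_* = \textrm{id}_{\pi_1(N)}$. Fix the formal homology connection on $N$ and the basis $\mB_N$ of $R_N$ defining $<_N$, and choose any formal homology connection on $M$ (one exists because we assume $H_{\leq 2}(M;\R)$ finite dimensional); let $\Theta_N,\Theta_M$ be the resulting holonomy maps and $R_N,R_M$ their targets. By Chen's Theorem these are isomorphisms of filtered Hopf algebras $\widehat{\R\pi_N}\cong R_N$ and $\widehat{\R\pi_M}\cong R_M$. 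The maps $\widehat{\R\iota}$ and $\widehat{\R r}$ on completed group rings preserve the augmentation ideals, hence the $J$-adic filtrations, so conjugating by the $\Theta$'s produces filtration-preserving algebra maps $\iota_\# : R_N \to R_M$ and $r_\# : R_M \to R_N$. Since $\widehat{\R r}\circ\widehat{\R\iota}=\widehat{\R(r\iota)}=\textrm{id}$, we get $r_\#\circ\iota_\#=\textrm{id}_{R_N}$, and by construction $\Theta_M\circ\iota_* = \iota_\#\circ\Theta_N$.

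The point I would exploit is that, because $\iota_\#$ admits a filtered retraction, it preserves filtration degree \emph{exactly}: if $w\in R_N$ satisfies $\textrm{gr}_e(w)\neq 0$ but $\iota_\#(w)\in\mF^{e+1}R_M$, then $w=r_\#\iota_\#(w)\in\mF^{e+1}R_N$, a contradiction. Thus the induced graded map $\textrm{gr}_e(\iota_\#):\textrm{gr}_e R_N\to\textrm{gr}_e R_M$ is injective in every degree $e$. I would then choose the grading and homogeneous, degree non-decreasing basis $\mB_M$ of $R_M$ as follows: in each degree $e$, let $u_1<u_2<\cdots$ be the degree $e$ elements of $\mB_N$, place the linearly independent vectors $\textrm{gr}_e(\iota_\#)(u_1),\textrm{gr}_e(\iota_\#)(u_2),\ldots$ first, in that order, and append an arbitrary ordered basis of a complement in $\textrm{gr}_e R_M$. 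Transported to a homogeneous basis of $R_M$, this $\mB_M$ is admissible and hence defines a genuine holonomy ordering $<_M$ of $\pi_1(M)/\pi_1(M)_\infty$.

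Finally I would compute the restriction. For $a,b\in\pi_1(N)$ set $w=\Theta_N(b)-\Theta_N(a)$, so $\Theta_M(\iota_* b)-\Theta_M(\iota_* a)=\iota_\#(w)$. The lexicographic comparison of $\iota_* a$ and $\iota_* b$ is decided by the first $\mB_M$-coordinate at which $\iota_\#(w)$ is nonzero. By the exact degree preservation this occurs in degree $e$, the degree where $<_N$ compares $a$ and $b$; all lower coordinates of $\iota_\#(w)$ vanish, and in degree $e$ the coordinates of $\iota_\#(w)$ along the leading block $\textrm{gr}_e(\iota_\#)(u_k)$ equal the $\mB_N$-coordinates $c_k$ of $\textrm{gr}_e(w)$, while the complementary coordinates vanish. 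Hence the first nonzero $\mB_M$-coordinate of the difference corresponds to the first nonzero $c_k$, with the same sign, so $\iota_* a <_M \iota_* b$ iff $a <_N b$. (Taking $a=1$ and using $r_\#\iota_\#=\textrm{id}$ also shows $\iota_*$ is injective modulo $\pi_1(M)_\infty$, so the restriction is a well-defined ordering of $\pi_1(N)$.)

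I expect the main obstacle to be the second paragraph: one must be sure that the filtered retraction forces $\iota_\#$ to respect filtration degree exactly and that a single homogeneous, degree non-decreasing basis of $R_M$ can be chosen simultaneously compatibly with the injections $\textrm{gr}_e(\iota_\#)$ in all degrees, so that the complementary coordinates in the last step vanish identically rather than only up to higher order. The topological hypothesis that $N$ is a genuine retract of $M$ — not merely that $\pi_1(N)$ injects into $\pi_1(M)$ — enters exactly here, through the existence of $r_\#$, which is what produces this vanishing.
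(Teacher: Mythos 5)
Your argument is correct, but it takes a genuinely different route from the paper's. The paper works at the level of de Rham DGAs: it uses the retraction $r$ to split $A^{*}_{DR}(M)= r^{*}A^{*}_{DR}(N)\oplus A'$, constructs a formal homology connection on $M$ that \emph{extends} the chosen one on $N$, concludes $R_{M}=R_{N}\oplus R'$, and then simply enlarges $\iota_{*}(\mB_{N})$ to a basis of $R_{M}$. You instead stay entirely on the completed group-ring side: invoking Chen's isomorphism $\widehat{\R\pi}\cong R$, you transport $\widehat{\R\iota}$ and $\widehat{\R r}$ to filtered algebra maps with $r_{\#}\circ\iota_{\#}=\mathrm{id}$, deduce that $\iota_{\#}$ is injective on each associated graded piece, and build the basis of $R_{M}$ degree by degree with the images $\iota_{\#}(u_{k})$ placed first. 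What your version buys is that the delicate step in the paper --- checking that the inductive construction of a formal homology connection can actually be carried out compatibly with the DGA splitting, which the paper asserts rather than verifies --- is replaced by a purely formal splitting argument valid for \emph{any} formal homology connection on $M$; the cost is that you lean on the full strength of Chen's theorem (the Hopf-algebra isomorphism $\widehat{\R\pi_{M}}\cong R_{M}$, hence implicitly residual torsion-free nilpotence issues handled in the quotient $G/G_{\infty}$) and you lose the geometric description of the extended lexicographic expression as iterated integrals of forms pulled back from $N$. One small point worth making explicit in your write-up: when you ``transport'' the graded vectors $\mathrm{gr}_{e}(\iota_{\#})(u_{k})$ to a homogeneous basis of $R_{M}$, the cleanest choice of lift is $\iota_{\#}(u_{k})$ itself, declaring $R_{M}^{e}$ to be spanned by these together with lifts of a graded complement; with that choice the vanishing of the complementary coordinates in your final computation is immediate, since $\iota_{\#}(w')\in\mF^{e+1}R_{M}=\bigoplus_{i\geq e+1}R_{M}^{i}$ contributes nothing in degree $e$.
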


\begin{proof}
 Let $(\omega_{N},\delta_{N})$ be a formal homology connection of $N$ and $\mB_{N}$ be an ordered, degree non-decreasing basis of $R_{N}$ which defines the holonomy ordering $<_{N}$.
Let $r: M \rightarrow N$ be a retraction.
Since $N$ is a retract of $M$, the de Rham DGA of $M$ is written as a direct sum $A_{DR}^{*}(M)= r^{*}A_{DR}^{*}(N) \oplus A'$ for some DGA $A'$.
Thus, we can construct a formal homology connection $(\omega_{M},\delta_{M})$ so that it is an extension of $(\omega_{N}, \delta_{N})$. Hence, the non-commutative algebra $R_{M}$ is also written as a direct sum $R_{M} = R_{N} \oplus R'$ for some non-commutative algebra $R'$. 
Let $\mB_{M}$ be an ordered, degree non-decreasing basis of $R_{M}$ obtained by adding vectors to $\iota_{*}(\mB_{N})$. Then the holonomy ordering defined by $(\omega_{M},\delta_{M})$ and $\mB_{M}$ is an extension of the holonomy ordering of $<_{N}$.
\end{proof}

\subsection{Formal space case}

In this section we study properties of holonomy orderings for formal spaces.
In the case of formal space, the structure of holonomy ordering is determined by the cohomology algebra, in particular the cup product of the $1$st cohomology groups.

\begin{thm}
\label{thm:formal}
Assume that $M$ and $N$ are formal and that there exists a continuous map $\iota: N \rightarrow M$ which satisfies the following conditions.
\begin{enumerate}
\item $\iota_{*}:H_{\leq 2}(N;\R) \rightarrow H_{\leq 2}(M;\R)$ is injection. 
\item $\iota_{*}$ induces a decomposition $H_{\leq 2}(M;\R) = \iota_{*}H_{\leq 2}(N;\R) \oplus A$ which preserves the dual of the cup product.
That is, $\cup^{*}(\iota_{*}H_{2}(N;\R)) \subset \iota_{*}H_{1}(N;\R) \otimes \iota_{*}H_{1}(N;\R)$ and $\cup^{*}(A_{2}) \subset A_{1} \otimes A_{1}$ hold.
\end{enumerate}
Then $\iota_{*}: \pi_{1}(N) \rightarrow \pi_{1}(M)$ is also injection, and each holonomy ordering $<_{N}$ can be extended as a holonomy ordering $<_{M}$ of $\pi_{1}(M)$.
\end{thm}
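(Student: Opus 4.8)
The plan is to reduce the statement to the cohomology algebras by formality, and then to replace the geometric retraction used in Theorem~\ref{thm:retract} by a purely algebraic one built from the cup-product splitting in hypothesis~(2). Using formality of $M$ and $N$, I would take \emph{quadratic} formal homology connections on both; writing $\{Y_a\}$ for a basis of $\iota_*H_1(N;\R)$ and $\{Z_b\}$ for a basis of the complement $A_1$, the degree $0$ algebras become $R_N=\R\dl Y_1,\ldots,Y_p\dr/\mN_N$ and $R_M=\R\dl Y_1,\ldots,Y_p,Z_1,\ldots,Z_q\dr/\mN_M$, where $\mN_N,\mN_M$ are the two-sided ideals generated by the images of the dual cup products $\cup_N^*,\cup_M^*$. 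The force of hypothesis~(2) is that $\cup_M^*(H_2(M;\R))$ has no mixed $Y\!\otimes\!Z$ terms: its generators split into a $Y$-block, carried by $\iota_*$ onto the generators of $\mN_N$ by naturality of the cup product, and a $Z$-block lying in the span of the $Z_b$.

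The main algebraic step is to produce an honest retraction between these algebras. Let $j\colon R_N\to R_M$ be induced by $Y_a\mapsto Y_a$, and let $\bar\pi\colon R_M\to R_N$ be induced by $Y_a\mapsto Y_a,\ Z_b\mapsto 0$. I would check that both respect the ideals: $j(\mN_N)\subset\mN_M$ is immediate, and $\bar\pi(\mN_M)\subset\mN_N$ holds precisely because $\bar\pi$ fixes each $Y$-block relation and annihilates each $Z$-block relation, so both maps descend to the quotients. Since $\bar\pi\circ j=\mathrm{id}_{R_N}$, the map $j$ is a graded split injection and $R_M=j(R_N)\oplus\ker\bar\pi$ as graded vector spaces. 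This is the only place where hypothesis~(2) is essential: a mixed relation would be sent by $\bar\pi$ to a $Y$-element outside $\mN_N$ and the retraction would collapse.

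Next I would match this with Chen's holonomy map. By functoriality (equivalently the naturality of Chen's isomorphism $\widehat{\R\pi_1}\cong R$), $\iota$ induces an algebra map $\iota_\sharp\colon R_N\to R_M$ with $\Theta_M\circ\iota_*=\iota_\sharp\circ\Theta_N$; since these are quadratic algebras, $\iota_\sharp$ is determined by its action on generators, namely $\iota_*\colon H_1(N;\R)\to H_1(M;\R)$, so $\iota_\sharp=j$. The hard part will be pinning down this compatibility in the absence of a genuine retraction $M\to N$; the clean route is to observe that condition~(1) forces the $Z$-dual $1$-forms to pull back to cohomologically trivial forms on $N$, which yields $\bar\pi\circ\Theta_M\circ\iota_*=\Theta_N$ and hence again $\iota_\sharp=j$. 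From $j$ injective and $\Theta_N$ injective on $\pi_1(N)/\pi_1(N)_\infty$ one then gets $\iota_*^{-1}(\pi_1(M)_\infty)=\pi_1(N)_\infty$, so $\iota_*$ is injective (on $\pi_1(N)$ itself once it is residually torsion-free nilpotent).

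Finally, to extend the ordering I would start from the ordered, homogeneous, degree non-decreasing basis $\mB_N=\{v_i\}$ of $R_N$ defining $<_N$, keep $\{j(v_i)\}$ in the same relative order, and interleave a homogeneous basis of $\ker\bar\pi$ to obtain a degree non-decreasing basis $\mB_M$ of $R_M$; let $<_M$ be the associated holonomy ordering. For $\gamma\in\pi_1(N)$ one has $\Theta_M(\iota_*\gamma)=j(\Theta_N\gamma)\in j(R_N)$, so every $\ker\bar\pi$-coordinate vanishes while the $j(v_i)$-coordinate equals $v_i^*(\Theta_N\gamma)$. As in the proof of Proposition~\ref{prop:MisH}, the lexicographic comparison on $\mB_M$ between two elements of $\iota_*\pi_1(N)$ therefore reduces to the comparison on $\mB_N$, so $<_M$ restricts to $<_N$, which completes the extension.
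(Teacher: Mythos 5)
Your proposal follows essentially the same route as the paper: formality supplies quadratic formal homology connections, hypothesis (2) splits the generators of $\mN_{M}$ into $\iota_{*}\mN_{N}$ plus the ideal generated by $\cup^{*}(A_{2})$, the induced map $R_{N}\rightarrow R_{M}$ is an injection compatible with the holonomy maps, and the ordering is extended by enlarging a degree non-decreasing basis of $R_{N}$ to one of $R_{M}$. Your explicit algebraic retraction $\bar{\pi}$ (with $\bar{\pi}\circ j=\mathrm{id}$) is a welcome justification of the injectivity of $R_{N}\hookrightarrow R_{M}$, a step the paper asserts without detail, but it does not change the underlying argument.
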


\begin{proof}
First observe that $\iota$ induces an injection $ \widehat{T\iota} : \widehat{TH_{1}(N)} \rightarrow \widehat{TH_{1}(M)}$.
Moreover, since $M$ and $N$ are formal, $M$ and $N$ have quadratic formal homology connections.
Fix such a quadratic formal homology connection, and let $\Theta_{N}: \pi_{1}(N) \rightarrow \widehat{TH_{1}(N)} \slash \mN_{N} =R_{N}$ and $\Theta_{M}: \pi_{1}(M) \rightarrow \widehat{TH_{1}(N)} \slash \mN_{M} =R_{M}$ be holonomy maps.

Now the ideal $\mN_{N}$ and $\mN_{M}$ are generated by the image of the dual of the cup products.
Thus from the assumption we conclude that $\mN_{M}=\iota_{*}\mN_{N} + \mathfrak{a}$ holds, where $\mathfrak{a}$ is an ideal generated by $\cup^{*}(A_{2})$. Hence we conclude that $\widehat{T\iota}$ induces an injection $\widehat{T\iota_{*}} : R_{N} \hookrightarrow R_{M}$, thus $\iota_{*}:\pi_{1}(N) \rightarrow \pi_{1}(M)$ is also an injection.

Let $\mB_{N}$ be the ordered degree non-decreasing basis of $R_{N}$, which defines a holonomy ordering $<_{N}$ of $\pi_{1}(N)$. By adding vectors to $\iota_{*}\mB_{N}$, we form an ordered degree non-decreasing basis $\mB_{M}$ of $R_{M}$. Then the holonomy ordering $<_{M}$ defined by the basis $\mB_{M}$ provides an extension of the holonomy ordering $<_{N}$.
\end{proof}

\begin{exam}
Assume that $M$ is a formal space and the cup product $\cup: H^{1}(M;\R) \otimes H^{1}(M;\R) \rightarrow H^{2}(M;\R)$ is the zero map.
Then, by Theorem \ref{thm:formal} (take $N$ as the $b_{1}(M)$-punctured disc), we conclude that $\pi_{1}(M)$ contains the rank $b_{1}(M)$ free group $F$, and every holonomy ordering of $F$ can be extended as a holonomy ordering of $M$.
\end{exam}

\subsection{Generalizations of holonomy ordering and examples}

We close the paper by providing a generalized construction of the holonomy ordering construction which can be seen as a special case of group extension construction described in Lemma \ref{lem:extension}.

We call an ordered basis $\mB=\{v_{i}\}_{i \in \mI}$ of $R_{M} = \widehat{\R \pi_{1}M}$ is {\it bi-ordering basis} if $\{ v_{i}^{*}\circ\Theta_{M}\}_{i \in \mI}$ is a lexicographical expression of a bi-ordering $<$ of $\pi_{1}(M)\slash \pi_{1}(M)_{\infty}$. Theorem \ref{thm:main1} shows that if $\mB$ is degree-non-decreasing homogeneous basis, then $\mB$ is bi-ordering basis.

Now we construct a bi-ordering basis which is not degree non-decreasing
by using quasi-nilpotent fibration.
Let $B$ and $F$ be connected manifolds having the residually torsion-free nilpotent fundamental groups, and assume that $\pi_{2}(B)=1$. A fibration $F \rightarrow E \rightarrow B$ is called a {\it quasi-nilpotent fibration} if $\pi_{1}(B)$ acts on $H_{1}(F;\R)$ nilpotently. 

We can construct a bi-ordering basis of $R_{E} = \widehat{\R \pi_{1}(E)}$ as follows. First of all, let us take an arbitrary bi-ordering basis $\{v^{B}_{j}\} _{j \in \mathcal{J}}$ of $R_{B}= \widehat{\R \pi_{1}(B)}$. Since $\pi_{1}(B)$ acts on $H_{1}(F;\R)$ nilpotently, this fibration induces an exact sequence of the nilpotent completions of the fundamental groups \cite{b}.
 \[ 1 \rightarrow R_{F} \rightarrow R_{E} \rightarrow R_{B} \rightarrow 1. \]
 
Since $\pi_{1}(B)$ acts on $H_{1}(F;\R)$ nilpotently, there is a basis $\{X_{1},\ldots,X_{m}\}$ of $H_{1}(F;\R)$ such that $g(X_{i}) = X_{i} + \sum_{i'>i}^{m} a_{i'} X_{i'}$ holds for all $g\in \pi_{1}(B)$. Let us choose an ordered, degree non-decreasing basis $\{v^{F}_{i}\}_{i \in \mI}$ of $R_{F}=\widehat{\R \pi_{1}(F)}$ as the subset of the {\sf DegLex}-ordered monomials of $\R \dl X_{1},\ldots ,X_{m}\dr$. 

Now we are ready to give a bi-ordering basis of $R_{E}$. Let us take an ordered basis $\{ v^{B}_{j} v^{F}_{i}\}_{(j,i) \in\mathcal{J} \times  \mI}$ of $R_{E}$. Here the ordering of $\mathcal{J} \times  \mI$ is the lexicographical ordering defined by $(j,i) >(j',i')$ if and only if $j>j'$, or $j=j'$ and $i'>i$. 
From the choice of the basis $\{v^{F}_{i}\}$, $g(v^{F}_{i}) = v^{F}_{i} + \sum_{i'>i} c_{i'} v^{F}_{i'}$ holds for all $g \in \pi_{1}(B)$, and $v^{F}_{i} \cdot  v^{B}_{j} = v^{B}_{j}v^{F}_{i} + \sum_{i' >i} v^{B}_{j}v^{F}_{i'}$. This implies that the basis $\{v^{B}_{j}v^{F}_{i}\}_{(j,i)\in\mathcal{J} \times  \mI}$ is a bi-ordering basis.

Thus, we can construct a bi-ordering basis and bi-ordering using a sequence of quasi-nilpotent fibration. We call such a bi-ordering $<$ {\it generalized holonomy orderings}.
In algebraic point of view, this is a construction of bi-ordering of $\pi_{1}(E)$ from bi-orderings of $\pi_{1}(F)$ and $\pi_{1}(B)$ and is a special case of a group extension construction described in Lemma \ref{lem:extension}.

We close the paper by giving an example of known bi-orderings which are in fact a generalized holonomy ordering.

\begin{exam}[The fundamental groups of fiber-type arrangements \cite{kr}]
Recall that a hyperplane arrangement $\mA=\{H_{i}\}$ of $\C^{n}$ is called a {\it fiber-type arrangement} if its complement $M_{\mA}= \C^{n} -\bigcup_{H \in \mA} H$ has a tower of fibrations
\[ M_{\mA}=M_{n} \stackrel{p_{n}}{\rightarrow} M_{n-1} \stackrel{p_{n-1}}{\rightarrow} \cdots \stackrel{p_{2}}{\rightarrow} M_{1}= \C -\{0\} \]
where each fiber $F_{k}$ of $p_{k}$ is homeomorphic to $\C -\{ \textrm{finite points}\}$. See \cite{fr} for the precise definition. The braid arrangement is a typical example of fiber-type arrangement. It is known that for each fibration $p_{k}$, the action of $\pi_{1}(M_{k-1})$ on $H_{1}(F_{k})$ is trivial \cite{fr}. In particular, each fibration is quasi-nilpotent.
 
Thus, by using the tower of quasi-nilpotent fibrations we can construct a bi-ordering basis of $R_{M_{\mA}} = \widehat{\R \pi_{1}(M_{\mA})}$, and bi-ordering of $\pi_{1}(M_{\mA})$.

On the other hand, in \cite{kr} Kim-Rolfsen constructed a bi-ordering of $\pi_{1}(M_{\mA})$ by using the tower of quasi-nilpotent fibrations and the group extension construction (Lemma \ref{lem:extension}). Thus Kim-Rolfsen's bi-ordering can be seen as a generalized holonomy ordering.
\end{exam}


\begin{thebibliography}{1}

\bibitem{b} A.K. Bouesfield and D.M.Kan, 
{\em{Homotopy Limits, Completions and Localizations,}}
Lecture Note in Math. Vol 304 (1972) 

\bibitem{c} K.T.Chen, 
{\em{Iterated path integrals,}}
Bull. Amer. Math. Soc. \textbf{83} (1977), 181-210.

\bibitem{fr} M.Falk, R.Randell, 
{\em{The lower central series of a fiber-type arrangement,}}
Invent. Math. \textbf{82} (1985), 77-88.

\bibitem{h} R.Hain, 
{\em{The de Rham homotopy theory of complex algebraic varieties I,}}
K-theory, \textbf{1}, (1987), 271-324.

\bibitem{mks} W.Magnus, A.Karrass. D.Solitar 
{\em{Combinatorial group theory,}}
J. Wiley and sons, New York, 1966.


\bibitem{kr} D.Kim, D.Rolfsen, 
{\em{An ordering for groups of pure braids and fibre-type hyperplane arrangements,}}
Canadian J. Math. \textbf{55} (2002), 822-838. 

\bibitem{k1} T.Kohno, 
{\em{Vassiliev invariants of braids and iterated integrals,}}
Advanced studies in Pure Math. \textbf{27} (2000), 157-168.
\bibitem{k2} T.Kohno, 
{\em{ Bar complex, configuration spaces and finite type invariants for braids,}}
Topology. Appl. \textbf{157} (2010), 2-9. 

\bibitem{lrr} P.Linnell, A. Rhemtulla and D.Rolfsen, 
{\em{Invariant group orderings and Galois conjugates,}}
 J. Algebra, \textbf{319} (2008), 4891-4898. 

\bibitem{rw} D.Rolfsen and B.Wiest, 
{\em{Free group automorphisms, invariant orderings and topological applications,}}
Algebr. Geom. Topol. \textbf{1} (2001), 311-320.

\bibitem{s} D.Sullivan,
{\em{Infinitesimal computations in topology,}}
Publ. Math. I.H.E.S., \textbf{47} (1977), 269-331.

\bibitem{st} J.Stallings,
{\em{Homology and central series of groups,}}
J. Algebra, \textbf{2} (1965), 170-181.


\end{thebibliography}
\end{document}